\renewcommand{\le}{\leqslant}
\renewcommand{\ge}{\geqslant}
\newcommand{\dd}{\displaystyle}
\title[On the maximum of a function]{On the maximum of a function connected with the Green function of a focal boundary value problem}
\author{E.I. Bravyi\\Perm National Research Polytechnic University, Perm, Russia}
\thanks{The work was performed as part of the State Task of the Ministry of Science and Higher Education of the Russian Federation (project FSNM-2020-0028). This work was supported by the Russian Foundation for Basic Research (project No. 18-01-00332).}
\date{August 2020}
\email{bravyi@perm.ru}
\theoremstyle{plain}
\newtheorem{theorem}{Theorem}
\newtheorem{lemma}[theorem]{Lemma}
\begin{document}
\begin{abstract} It's proved that a function connected with the Green function of the even order symmetric focal boundary value problem takes its maximal value on the diagonal.
\end{abstract}

\maketitle

\section{Introduction.}

We consider a focal boundary value problem for the simplest ordinary differential equation with even order  $n=2k$:
\begin{equation}\label{e--1}
  \left\{\begin{array}{l}
    (-1)^{n-k}x^{(n)}(t)=f(t),\quad t\in[0,1],\\
    x^{(i)}(0)=0,\quad i=0,\ldots,k-1,\quad x^{(j)}(1)=0,\quad j=k,\ldots,n-1.
  \end{array}
  \right.
\end{equation}

This problem
%\eqref{e--1}
is uniquely solvable, that is for every integrable function $f$, it has a unique solution
\begin{equation*}
  x(t)=\int_{0}^{1}G(t,s)f(s)\,ds, \quad t\in[0,1],
\end{equation*}
with the Green function
\begin{equation*}%\label{e-10}
  G(t,s)=\frac{1}{(m!)^2}\int_{0}^{\min(s,t)} (t-\tau)^m(s-\tau)^m\,d\tau,\quad t,s\in[0,1],
\end{equation*}
where $m=k-1$ \cite{1978-Keener}
(so, the function $G(t,s)$ is symmetric: $G(t,s)=G(s,t)$ for all ${t,s\in[0,1]}$).

Put
\begin{equation*}%\label{e-11}
  M(t,s)\equiv\sqrt{G(t,s)G(1,1)}-\sqrt{G(t,1)G(s,1)},\quad t,s\in[0,1].
\end{equation*}

The aim of the work is to prove the following technic assertion which is useful for proving of solvability of focal boundary value problems for functional differential equations, for example, for proving Theorem 1.2 in  \cite[p.~31]{Bravyi-1}.

\begin{theorem} The function $M(t,s)$ takes its maximum on the diagonal of the square  $(t,s)\in[0,1]\times[0,1]$:
\begin{equation*}%\label{e-12}
  \max\limits_{\substack{0\le s \le 1\\ 0\le t \le 1}} M(t,s) = M(t_0,t_0)
\end{equation*}
for some $t_0\in[0,1]$.
\end{theorem}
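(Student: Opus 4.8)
The plan is to realize the Green function as a Gram kernel. Setting $u_t(\tau)=(t-\tau)^m/m!$ for $\tau\le t$ and $u_t(\tau)=0$ for $\tau>t$, the integral formula reads $G(t,s)=\langle u_t,u_s\rangle$ in $L^2[0,1]$, so that the four quantities $G(t,s),G(t,1),G(s,1),G(1,1)$ are exactly the pairwise inner products of the three vectors $u_t,u_s,u_1$. Writing $\|\cdot\|$ for the $L^2$-norm, I would record the two identities
\[
M(t,s)=\|u_1\|\sqrt{\langle u_t,u_s\rangle}-\sqrt{\langle u_t,u_1\rangle\langle u_s,u_1\rangle},\qquad M(t,t)=\|u_1\|\,\|u_t\|-\langle u_t,u_1\rangle,
\]
the diagonal value being precisely the Cauchy--Schwarz defect of the pair $u_t,u_1$. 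It is also worth noting at the outset that $M$ vanishes on the whole boundary of the square: $G(0,\cdot)\equiv0$ kills the edges $t=0$ and $s=0$, while at $s=1$ (and symmetrically $t=1$) the two radicands coincide since $G(s,1)=G(1,1)$. As $M(t,t)>0$ for $0<t<1$ (there $u_t$ and $u_1$ are not proportional), the maximum is positive and attained at an interior point.

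The symmetry $M(t,s)=M(s,t)$ then does the geometric work. For each $a$ the function $\varphi_a(h):=M(a+h,a-h)$ is even, so $\varphi_a'(0)=0$ and the diagonal is transversally critical; the theorem reduces to showing that every $\varphi_a$ is maximized at $h=0$. Concretely, I would aim to establish the single symmetric inequality
\[
\sqrt{G(1,1)}\,\Big(\sqrt{G(t,t)}+\sqrt{G(s,s)}-2\sqrt{G(t,s)}\Big)\ge\Big(\sqrt{G(t,1)}-\sqrt{G(s,1)}\Big)^2 ,
\]
which, after rearrangement, is exactly $2M(t,s)\le M(t,t)+M(s,s)$. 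Granting it, $\max_{t,s}M(t,s)\le\max_\tau M(\tau,\tau)$, and since the reverse inequality is trivial the maximum is attained on the diagonal. An alternative would be to prove the stronger concavity of $\varphi_a$, namely that the transverse second-order combination $M_{tt}-2M_{ts}+M_{ss}$ is nonpositive; together with $\varphi_a'(0)=0$ this again places the maximum at $h=0$.

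The displayed inequality is the technical heart of the matter, and I expect it to be the main obstacle, because it is \emph{false} for three arbitrary vectors with nonnegative pairwise inner products. Indeed, if one takes $u_t,u_s$ of equal norm, nearly parallel to one another but nearly orthogonal to $u_1$, the left-hand side becomes small (of the order of the small angle between $u_t$ and $u_s$), while the right-hand side stays bounded below; a direct numerical check already violates the bound. Thus no purely Hilbert-space estimate can suffice, and in particular bounding $\langle v_t,v_s\rangle$ for the components $v_t,v_s$ orthogonal to $u_1$ by Cauchy--Schwarz alone is inadequate.

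The proof must therefore exploit the structure of the family $\{u_t\}$. Two features are available: first, the homogeneity $\|u_t\|^2=G(t,t)=t^{2m+1}G(1,1)$, which couples the length of $u_t$ to its direction, so that a vector nearly orthogonal to $u_1$ (small $t$) is automatically short and the bad configuration above cannot occur on our curve; and second, the total positivity of the truncated-power kernel $(t,\tau)\mapsto(t-\tau)^m_+$, whose $2\times2$ minors already yield $G(1,1)G(t,s)\ge G(t,1)G(s,1)$, i.e. $M\ge0$. The plan is to feed these sign and monotonicity properties into the second-order estimate, showing quantitatively that the slack in $\langle u_t,u_s\rangle$ relative to $\|u_t\|\,\|u_s\|$ dominates the defect $(\sqrt{G(t,1)}-\sqrt{G(s,1)})^2$. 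Carrying out this last quantitative comparison, presumably via the explicit polynomial form of $G$ on each region $t\le s$, is where the real work lies.
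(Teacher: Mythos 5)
Your proposal sets up the problem correctly (Gram-kernel representation $G(t,s)=\langle u_t,u_s\rangle$, vanishing of $M$ on the boundary, symmetry, positivity in the interior via $2\times2$ minors of the totally positive truncated-power kernel), and the reduction it aims at is legitimate: if one could prove
\begin{equation*}
2M(t,s)\le M(t,t)+M(s,s),\qquad t,s\in[0,1],
\end{equation*}
the theorem would follow at once. But that inequality is exactly where the proof stops: you state yourself that it is false for arbitrary triples of vectors with nonnegative inner products, that no Hilbert-space estimate can suffice, and that the needed quantitative comparison ``is where the real work lies.'' So the central claim is never established, and everything before it is either elementary bookkeeping or a restatement of what must be shown. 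This is a genuine gap, not a stylistic omission: the paper itself notes that the short argument which works for the determinant $M_0(t,s)=G(t,s)G(1,1)-G(t,1)G(1,s)$ (namely $M_0(t,s)<\sqrt{M_0(t,t)M_0(s,s)}$, from the oscillation-kernel property) has no known analogue for $M$, which is precisely why the published proof does not attempt any pointwise concavity-type inequality.

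The paper instead argues through critical points: assuming the maximum is attained at an interior point off the diagonal, the two first-order conditions $\partial M/\partial t=\partial M/\partial s=0$ force the identity
\begin{equation*}
\frac{G(1,s_0)/G'_s(1,s_0)}{G(t_0,1)/G'_t(t_0,1)}=\frac{G'_t(t_0,s_0)}{G'_s(t_0,s_0)},
\end{equation*}
and the bulk of the work goes into proving the strict inequality \eqref{L-10}, which contradicts this identity for $0<s<t<1$. That inequality is in turn reduced to the monotonicity of $\widehat Q$ (Lemma \ref{Lemma-2}), then to the monotonicity of the logarithmic-derivative function $F$ in \eqref{F(y)}, and finally to the statement that the coefficient ratios $A_{k,m}/B_{k,m}$ of two explicit polynomials increase in $k$ (Lemma \ref{Lemma-3}), verified by a delicate binomial computation. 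None of this machinery, nor any substitute for it, appears in your proposal; your suggested alternative route (transverse concavity $M_{tt}-2M_{ts}+M_{ss}\le0$) is likewise only named, not proved, and would require comparable explicit estimates. As it stands, the proposal is a plausible plan whose hard step is missing.
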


\section{Proof}

\textbf{1.} It's clear, the function $M(t,s)$ is symmetric:
\begin{equation*}%\label{L-1}
  M(s,t)=\sqrt{G(s,t)G(1,1)}-\sqrt{G(s,1)G(1,t)}=M(t,s),\quad t,s\in[0,1].
\end{equation*}
Moreover,
\begin{equation*}
  M(t,0)=M(t,1)=0,\quad t\in[0,1],\quad M(0,s)=M(1,s)=0,\quad s\in[0,1].
\end{equation*}

For $m=0$, we have
\begin{equation*}
  M(t,s)= \left\{
                 \begin{array}{ll}
                   \sqrt{t}(1-\sqrt{s}), & \hbox{$0\le t\le s\le1$,} \\
                   \sqrt{s}(1-\sqrt{t}), & \hbox{$0\le s< t\le1$,}
                 \end{array}
               \right.
\end{equation*}
and
\begin{equation*}
  \max\limits_{\substack{0\le s \le 1\\ 0\le t \le 1}} M(t,s) =
%  M(1/4,1/4)=
  M\left(\frac14,\frac14\right).
\end{equation*}
Further we assume that integer $m\ge1$.

Now we show that $M(t,s)>0$ for all $t,s\in(0,1)$.
\newcommand{\GG}[5]{#1\left(
  \begin{array}{cc}
                   #2 & #3 \\
                   #4 & #5 \\
                 \end{array}
  \right)}
We have
\begin{equation*}%\label{L-4}
  M(t,s)=\frac{
               \GG{G}{t}{1}{s}{1}
              }
             {M_1(t,s)},\quad t,s\in(0,1),
\end{equation*}
where
\begin{equation*}%\label{L-3}
  \GG{G}{t}{1}{s}{1}\equiv
  \left|
                 \begin{array}{cc}
                   G(t,s) & G(t,1) \\
                   G(1,s) & G(1,1) \\
                 \end{array}
               \right|,
\end{equation*}
\begin{equation*}%\label{L-2}
  M_1(t,s)\equiv\sqrt{G(t,s)G(1,1)}+\sqrt{G(t,1)G(s,1)}>0,\quad t,s\in(0,1).
\end{equation*}

The Green function Грина $G(t,s)$ of the boundary value problem \eqref{e--1} is a oscillation kernel \cite[Theorem 8 (Kalafati-Gantmacher-Krein) or Theorem 9]{LS1976-2}. Therefore, in particular,
for all $\tau_1$, $\tau_2$, $s_1$, $s_2$ such that $0<\tau_1<\tau_2\le 1$, $0<s_1<s_2\le 1$, the following inequality is valid:
\begin{equation*}%\label{e-13}
  \GG{G}{\tau_1}{\tau_2}{s_1}{s_2}=\left|
    \begin{array}{cc}
      G(\tau_1,s_1) & G(\tau_1,s_2) \\
      G(\tau_2,s_1) & G(\tau_2,s_2)\\
    \end{array}
  \right|>0.
\end{equation*}
Therefore, $\GG{G}{t}{1}{s}{1}>0$ for all $t,s\in(0,1)$. It follows that  $M(t,s)>0$ for all $t,s\in(0,1)$.

Note, that the function $M_0(t,s)\equiv\GG{G}{t}{1}{s}{1}$ takes its maximal value just on the diagonal. Indeed, $\frac{M_0(t,s)}{G(1,1)}$ is the Green function of the problem
\begin{equation}\label{L-5}
  \left\{
  \begin{array}{l}
  (-1)^{n-k}x^{(n)}(t)=f(t),\quad t\in[0,1],\\
  x^{(i)}(0)=0,\quad i=0,\ldots,k-1,\\
  x(1)=0,\\
  \text{if $n\ge2$, then }x^{(j)}(1)=0,\quad j=k,\ldots,n-2.\\
  \end{array}
  \right.
\end{equation}
The Green function of \eqref{L-5} is also the oscillation kernel (by
the Kalafati-Gantmacher-Krein theorem \cite[теорема 8]{LS1976-2} or by Theorem 9 from the same article. Therefore, in particular, the inequality
\begin{equation*}%\label{L-6}
  \GG{M_0}{s}{t}{s}{t}>0,\quad 0<s<t<1,
\end{equation*}
holds. This inequality means that
\begin{equation*}
  M_0(t,s)<\sqrt{M_0(t,t)M_0(s,s)}\le \max\{M_0(t,t),M_0(s,s)\},\quad 0<s<t<1.
\end{equation*}
Thus, the function $M_0(t,s)$ takes its maximum value on the diagonal of the square (for some $t=s\in(0,1)$).

However, a such short proof for the function $M(t,s)$ is unknown for us. For this function, we divide the proof into some steps. First, we prove inequality \eqref{L-10} (item \textbf{2}). It follows that the extremal points of $M(t,s)$ can be placed only on the diagonal. Inequality \eqref{L-10} with respect the variables $t$ and $s$ is reduced to inequality \eqref{L-11} with respect the variables $s$, $t$, $s/t$ (item \textbf{3}), which is equivalent to the increasing of the function $F(y)$ from equality \eqref{F(y)} for all $y\in(0,1)$ (items \textbf{4}, \textbf{5}).
The increasing of $F(y)$ is equivalent to the increasing of the function $X_m(x)$ from equality \eqref{X_m(x)} for $x>0$ (items \textbf{6}, \textbf{7}). The function $X_m$ is fraction \eqref{X-frac} whose  numerator and denominator are polynomials with positive coefficients. The increasing of $X_m$ is equivalent to the increasing of the consequence of the relations of the corresponding coefficients of these polynomials.
The coefficients have the representation \eqref{A-11}, \eqref{A-13} via finite sums (item \textbf{8}). With this representation, the increasing of the consequence can be checked (Lemma \ref{Lemma-3}).

\textbf{2.} Suppose the maximum of the positive in $[0,1]\times[0,1]$ function $M(t,s)$ is taken at the point $(t_0,s_0)\in(0,1)\times(0,1)$:
\begin{equation*}%\label{L-7}
  \max_{t,s\in[0,1]} M(t,s)=M(t_0,s_0).
\end{equation*}

For  $t,s\in(0,1)$, we have
\begin{equation*}
  \frac{\partial M(t,s)}{\partial s}=\frac12 \frac{G'_s(t,s)}{G(t,s)}\sqrt{G(t,s)G(1,1)}-\frac12 \frac{G'_s(1,s)}{G(1,s)}\sqrt{G(1,s)G(t,1)},
\end{equation*}
\begin{equation*}
  \frac{\partial M(t,s)}{\partial t}=\frac12 \frac{G'_t(t,s)}{G(t,s)}\sqrt{G(t,s)G(1,1)}-\frac12 \frac{G'_t(t,1)}{G(t,1)}\sqrt{G(1,s)G(t,1)},
\end{equation*}
where
\begin{equation*}
  G'_t(t,s)\equiv\frac{\partial G(t,s)}{\partial t},\quad  G'_s(t,s)\equiv\frac{\partial G(t,s)}{\partial s}.
\end{equation*}
Further we also use the notation
\begin{equation*}
  G''_{ss}(t,s)\equiv\frac{\partial^2G(t,s)}{\partial s^2}.
\end{equation*}

By the equalities
\begin{equation*}
  \frac{\partial M(t_0,s_0)}{\partial s}=0,\quad \frac{\partial M(t_0,s_0)}{\partial t}=0,
\end{equation*}
it follows that
\begin{equation*}
  \dd\frac{\dd\frac{G'_t(t_0,s_0)}{G(t_0,s_0)}}{\dd\frac{G'_t(t_0,1)}{G(t_0,1)}}=\sqrt{\frac{G(1,s_0)G(t_0,1)}{G(t_0,s_0)G(1,1)}}=
  \dd\frac{\dd\frac{G'_s(t_0,s_0)}{G(t_0,s_0)}}{\dd\frac{G'_s(1,s_0)}{G(1,s_0)}},
\end{equation*}
therefore,
\begin{equation*}
\dd\frac{\dd\frac{G(1,s_0)}{G'_s(1,s_0)} } {\dd\frac{G(t_0,1)}{G'_t(t_0,1)}}=
\dd\frac{G'_t(t_0,s_0)}{G'_s(t_0,s_0)}.
\end{equation*}

We will show that
%\begin{lemma}
\begin{equation}\label{L-10}
\dd\frac{\dd\frac{G(1,s)}{G'_s(1,s)} } {\dd\frac{G(t,1)}{G'_t(t,1)}}>
\dd\frac{G'_t(t,s)}{G'_s(t,s)}\quad\text{for all}\quad  0<s<t<1.
\end{equation}
%\end{lemma}

It will follows that the maximum of $M$ cannot be taken in the triangle $0\le s<t\le1$ and (by the symmetry) in the triangle $0\le t<s\le1$. Thus, the maximum of $M(t,s)$ is taken at some  $t=s\in(0,1)$.

\textbf{3.} To prove  \eqref{L-10} we first obtain
representations (as finite sums) for all functions from this inequality. For all  $0<s<t<1$, we have
\begin{equation*}
  (m!)^2G(1,s)=\int_{0}^{s}(s-\tau)^m(1-\tau)^m\,d\tau,\quad
\end{equation*}
\begin{equation*}
  (m!)^2G(t,1)=\int_{0}^{t}(t-\tau)^m(1-\tau)^m\,d\tau,\quad
\end{equation*}
\begin{equation*}
  (m!)^2G'_s(1,s)=m\int_{0}^{s}(s-\tau)^{m-1}(1-\tau)^m\,d\tau,\quad
\end{equation*}
\begin{equation*}
  (m!)^2G'_t(t,1)=m\int_{0}^{t}(t-\tau)^{m-1}(1-\tau)^m\,d\tau,\quad
\end{equation*}
\begin{equation*}
  (m!)^2G'_t(t,s)=m\int_{0}^{s}(s-\tau)^m(t-\tau)^{m-1}\,d\tau.\quad
\end{equation*}
For integer non-negative $i$, $j$, by induction with respect to $j$, we see that
\begin{equation*}
  \int_{0}^{1}\theta^i(1-\theta)^j\,d\theta=\frac{i!j!}{(i+j+1)!}.
\end{equation*}

From this, we get the representation, for example, for the function $G'_s$:
\begin{equation*}
\begin{split}
(m!)^2G'_s(t,s)=m\int_{0}^{s}(t-\tau)^m(s-\tau)^{m-1}\,d\tau
\\=
m\,s^{2m}\int_{0}^{s}
\left(\frac{t}{s}-\frac{\tau}{s}\right)^m\left(1-\frac{\tau}{s}\right)^{m-1}\,\frac{d\tau}{s}
\\=
m\,s^{2m}\int_{0}^{1}
\left(\frac{t}{s}-\theta\right)^m\left(1-\theta\right)^{m-1}\,d\theta
\\=
m\,s^{2m}\sum_{i=0}^m \left(\frac{t}{s}\right)^{m-i}(-1)^i \binom{m}{i}\frac{i!(m-1)!}{(i+m)!}
\\=
s^{2m}\sum_{i=0}^m \left(\frac{t}{s}\right)^{m-i}(-1)^i \binom{2m}{m-i}\frac{(m!)^2}{(2m)!}
\\=
s^{m}t^{m}\sum_{i=0}^m \left(\frac{s}{t}\right)^{i}(-1)^i \binom{2m}{m-i}\frac{(m!)^2}{(2m)!}.
\end{split}
\end{equation*}
Similarly, the following equalities can be obtained (for all $0<s<t<1$):
\begin{equation*}
\begin{split}
G(t,s)
=s^{m+1}t^{m}\sum_{i=0}^{m} \left(\frac{s}{t}\right)^{i}(-1)^i
\binom{2m+1}{m-i}\frac{1}{(2m+1)!},
\end{split}
\end{equation*}
\begin{equation*}
\begin{split}
G'_t(t,s)=
\frac{m\,t^{m-1}s^{m+1}}{(m!)^2}\int_{0}^{1}
\left(1-\frac{s}{t}\theta\right)^{m-1}\left(1-\theta\right)^{m}\,d\theta
\\=s^{m+1}t^{m-1}\sum_{i=0}^{m-1} \left(\frac{s}{t}\right)^{i}(-1)^i \binom{2m}{m-1-i}\frac{1}{(2m)!},
\end{split}
\end{equation*}
\begin{equation*}
\begin{split}
G'_s(t,s)=
\frac{m\,t^{m}s^{m}}{(m!)^2}\int_{0}^{1}
\left(1-\frac{s}{t}\theta\right)^{m}\left(1-\theta\right)^{m-1}\,d\theta
\\
=s^{m}t^{m}\sum_{i=0}^{m} \left(\frac{s}{t}\right)^{i}(-1)^i \binom{2m}{m-i}\frac{1}{(2m)!},
\end{split}
\end{equation*}
\begin{equation}\label{A-1}
\begin{split}
G(1,s)=
\frac{s^{m+1}}{(m!)^2}\int_{0}^{1}
\left(1-s\theta\right)^{m}\left(1-\theta\right)^{m}\,d\theta
\\
=s^{m+1}\sum_{i=0}^{m} s^{i}(-1)^i \binom{2m+1}{m-i}\frac{1}{(2m+1)!},
\end{split}
\end{equation}
\begin{equation}\label{A-2}
\begin{split}
G'_s(1,s)=
\frac{ms^{m}}{(m!)^2}\int_{0}^{1}
\left(1-s\theta\right)^{m}\left(1-\theta\right)^{m-1}\,d\theta
\\
=s^{m}\sum_{i=0}^{m} s^{i}(-1)^i \binom{2m}{m-i}\frac{1}{(2m)!},
\end{split}
\end{equation}
\begin{equation*}
\begin{split}
G(t,1)=
t^{m+1}\sum_{i=0}^{m} t^{i}(-1)^i \binom{2m+1}{m-i}\frac{1}{(2m+1)!},
\end{split}
\end{equation*}
\begin{equation*}
\begin{split}
G'_t(t,1)
=t^{m}\sum_{i=0}^{m} t^{i}(-1)^i \binom{2m}{m-i}\frac{1}{(2m)!}.
\end{split}
\end{equation*}

Consider the right hand side of inequality \eqref{L-10}. For $0<s<t<1$, we have
\begin{equation*}%\label{L-10}
\dd\frac{G'_t(t,s)}{G'_s(t,s)}=\frac{t^{m-1}s^{m+1}}{t^{m}s^{m}}\frac{\sum_{i=0}^{m-1} \left(\frac{s}{t}\right)^{i}(-1)^i \binom{2m}{m-1-i}}{\sum_{i=0}^{m} \left(\frac{s}{t}\right)^{i}(-1)^i \binom{2m}{m-i}}=\frac{s}{t}\widetilde{Q} \left(\frac{s}{t}\right),
\end{equation*}
where
\begin{equation*}
\widetilde{Q}(s)=\frac{\sum_{i=0}^{m-1} s^{i}(-1)^i \binom{2m}{m-1-i}}{\sum_{i=0}^{m} s^{i}(-1)^i \binom{2m}{m-i}}.
\end{equation*}
Obtain a representation for the left-hand side of \eqref{L-10}. Denote
\begin{equation}\label{A-3}
\begin{split}
  Q(s)\equiv \dd\frac{G(1,s)}{G'_s(1,s)} =\frac{s^{m+1}\sum_{i=0}^{m} s^{i}(-1)^i \binom{2m+1}{m-i}\frac{1}{(2m+1)!}}{s^{m}\sum_{i=0}^{m} s^{i}(-1)^i \binom{2m}{m-i}\frac{1}{(2m)!}}\equiv\frac{s}{2m+1} \widehat{Q}(s),
\end{split}
\end{equation}
where
\begin{equation*}
\begin{split}
  \widehat{Q}(s)\equiv \dd\frac{\sum_{i=0}^{m} s^{i}(-1)^i \binom{2m+1}{m-i}}{\sum_{i=0}^{m} s^{i}(-1)^i \binom{2m}{m-i}}.
\end{split}
\end{equation*}
Obviously, that  $Q(s)>0$, $\widehat{Q}(s)>0$ for $s\in(0,1)$.
Moreover, $\widetilde{Q}(s)+1=\widehat{Q}(s)$. Indeed,
\begin{equation*}
  \widetilde{Q}(s)+1-\widehat{Q}(s)=\dd\frac{\sum_{i=0}^{m} s^{i}(-1)^i \left(\binom{2m}{m-1-i}+\binom{2m}{m-i}-\binom{2m+1}{m-i}\right)}{\sum_{i=0}^{m} s^{i}(-1)^i \binom{2m}{m-i}}=0.
\end{equation*}
So, inequality \eqref{L-10} is equivalent to the inequality
\begin{equation*}%\label{L-11}
  \frac{Q(s)}{Q(t)}=\frac{s}{t}\frac{\widehat{Q}(s)}{\widehat{Q}(t)}>\frac{s}{t}\widetilde{Q}\left(\frac{s}{t}\right),
\end{equation*}
that is equivalent to the inequality
\begin{equation}\label{L-11}
  \frac{\widehat{Q}(s)}{\widehat{Q}(t)}>\widehat{Q}\left(\frac{s}{t}\right)-1,\quad 0<s<t<1.
\end{equation}

\textbf{4.} Consider inequality  \eqref{L-11} for $t=1$. We have
\begin{equation}\label{Q(1)=2}
\begin{split}
\widehat{Q}(1)=
\dd\frac{\sum_{i=0}^{m} (-1)^i \binom{2m+1}{i}}
        {\sum_{i=0}^{m} (-1)^i \binom{2m}{i}  }
        =
   \frac{\binom{2m}{m}}{\binom{2m-1}{m}}=2,
\end{split}
\end{equation}
To prove \eqref{L-11} for $t=1$ it remains to prove the inequality
\begin{equation*}
  \widehat{Q}(s)<2,\quad s\in(0,1).
\end{equation*}

Here and further we use the following assertion.

\begin{lemma}\label{Lemma-1}
Let $A_{j}>0$, $B_{j}>0$ for $j=0,\ldots,J$.

If the sequence $\frac{A_j}{B_j}$, $j=0,\ldots,J$, doesn't decrease and  are not constant, the function
\begin{equation*}
  X(x)\equiv\frac{\sum_{j=0}^{J}A_{j}x^j}{\sum_{j=0}^{J}B_{j}x^j},\quad x>0,
\end{equation*}
increases.
\end{lemma}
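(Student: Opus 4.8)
The plan is to prove that $X$ is strictly increasing by showing that its derivative is positive throughout $(0,\infty)$. Write $P(x)=\sum_{j=0}^{J}A_j x^j$ and $Q(x)=\sum_{j=0}^{J}B_j x^j$, so that $X=P/Q$ with $Q(x)>0$ for $x>0$. Since $X'(x)=\dd\frac{P'(x)Q(x)-P(x)Q'(x)}{Q(x)^2}$, it suffices to prove that the numerator
\begin{equation*}
  N(x)\equiv P'(x)Q(x)-P(x)Q'(x)
\end{equation*}
is strictly positive for every $x>0$.

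Next I would expand $N(x)$ as a double sum and symmetrize. Multiplying the series termwise gives
\begin{equation*}
  N(x)=\sum_{i=0}^{J}\sum_{j=0}^{J}(j-i)\,A_j B_i\,x^{i+j-1}.
\end{equation*}
The diagonal terms $i=j$ vanish, and I would pair the summand indexed by $(i,j)$ with the one indexed by $(j,i)$. For $i<j$ this pair contributes
\begin{equation*}
  (j-i)\bigl(A_j B_i-A_i B_j\bigr)x^{i+j-1}
  =(j-i)\,B_iB_j\Bigl(\tfrac{A_j}{B_j}-\tfrac{A_i}{B_i}\Bigr)x^{i+j-1}.
\end{equation*}
Because $j-i>0$, $B_iB_j>0$, and the sequence $A_k/B_k$ does not decrease, each such paired contribution is nonnegative; hence $N(x)\ge 0$ for all $x>0$.

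Finally I would upgrade this to strict positivity using the hypothesis that the sequence $A_k/B_k$ is not constant. Since it is non-decreasing but not constant, there is at least one index $k$ with $A_{k+1}/B_{k+1}>A_k/B_k$, so the pair $(i,j)=(k,k+1)$ contributes a strictly positive coefficient in front of $x^{2k}$. As every remaining paired contribution is nonnegative and $x^{i+j-1}>0$ for $x>0$, it follows that $N(x)>0$ for each $x>0$, whence $X'(x)>0$ and $X$ increases on $(0,\infty)$.

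I expect the only delicate point to be the passage from ``non-decreasing and not constant'' to the existence of a genuinely positive term that is valid uniformly for all $x>0$; the symmetrization itself and the quotient-rule computation are routine, and the positivity of $B_iB_j$ combined with the monotone ratio does the rest. An alternative route, should one prefer to avoid differentiation, would be to compare $X(x_1)$ and $X(x_2)$ directly for $x_1<x_2$ via the sign of $P(x_2)Q(x_1)-P(x_1)Q(x_2)$, but this leads to the same symmetrized double sum and offers no real simplification.
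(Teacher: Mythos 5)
Your proof is correct and follows essentially the same route as the paper: apply the quotient rule, expand the numerator as a double sum, and symmetrize over pairs $i<j$ to obtain terms $(j-i)B_iB_j\bigl(\tfrac{A_j}{B_j}-\tfrac{A_i}{B_i}\bigr)x^{i+j-1}$, each nonnegative by the monotone-ratio hypothesis. The only difference is that you spell out the strict-positivity step (existence of a consecutive pair with strictly increasing ratio), which the paper leaves implicit.
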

\begin{proof}
It's enough to compute the derivative:
\begin{equation*}%\label{L-2--}
\begin{split}
  \frac{dX}{dx}=\frac{{\sum_{j=0}^{J}A_{j}jx^j}{\sum_{i=0}^{J}B_{i}x^i}-{\sum_{i=0}^{J}A_{i}x^i}{\sum_{j=0}^{J}B_{j}jx^j}}{{x\left(\sum_{j=0}^{J}B_{j}x^j\right)^2}}
\\=\frac{{\sum_{0\le i<j\le J} B_jB_i \left(\frac{A_j}{B_j}-\frac{A_i}{B_i}\right)(j-i)x^{i+j}}}{{x\left(\sum_{j=0}^{J}B_{j}x^j\right)^2}}>0 \ \text{ for } x>0.
\end{split}
\end{equation*}
\end{proof}

From the equality \eqref{Q(1)=2} and from the next lemma, it follows that inequality \eqref{L-11} is fulfilled for $t=1$, $s\in(0,1)$.

First, it will be useful to compute the following integral.
For integer non-positive $M$ and $K$ we have
\begin{equation}\label{L-12}
\begin{split}
  \int_0^1(1-\theta)^K(1-\theta s)^M\,d\theta=\int_0^1(1-\theta)^K(s(1-\theta)+1-s)^M\,d\theta\\=
\sum_{i=0}^{M}s^i\int_0^1(1-\theta)^{i+K}\,d\theta\binom{M}{i}(1-s)^{M-i}=
\sum_{i=0}^{M}\frac{s^i(1-s)^{M-i}}{i+K+1}\binom{M}{i}
\\=
\sum_{i=0}^{M}\left(\frac{s}{1-s}\right)^i \frac{\binom{M}{i}(1-s)^M}{i+K+1}.
\end{split}
\end{equation}

\begin{lemma}\label{Lemma-2}
The function $\widehat{Q}(s)$ increases for $s\in(0,1)$.
\end{lemma}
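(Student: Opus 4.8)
The plan is to recast $\widehat{Q}(s)$ into a form where Lemma~\ref{Lemma-1} applies. The obstacle is that the two polynomials defining $\widehat{Q}(s)$ in \eqref{A-3} have alternating-sign coefficients $(-1)^i\binom{2m+1}{m-i}$ and $(-1)^i\binom{2m}{m-i}$, so Lemma~\ref{Lemma-1} cannot be invoked directly on the expression \eqref{A-3}. To circumvent this I would return to the integral representations of $G(1,s)$ and $G'_s(1,s)$ displayed in \eqref{A-1} and \eqref{A-2}, namely
\[
(m!)^2G(1,s)=s^{m+1}\int_0^1(1-s\theta)^m(1-\theta)^m\,d\theta,\qquad
(m!)^2G'_s(1,s)=m\,s^{m}\int_0^1(1-s\theta)^m(1-\theta)^{m-1}\,d\theta.
\]
Since $\widehat{Q}(s)=\frac{2m+1}{s}\cdot\frac{G(1,s)}{G'_s(1,s)}$ by \eqref{A-3}, cancelling the factor $s^{m}/(m!)^2$ yields the clean ratio of integrals
\[
\widehat{Q}(s)=\frac{2m+1}{m}\cdot
\frac{\dd\int_0^1(1-s\theta)^m(1-\theta)^m\,d\theta}{\dd\int_0^1(1-s\theta)^m(1-\theta)^{m-1}\,d\theta}.
\]

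Next I would apply the integral formula \eqref{L-12} to both integrals: with $M=m$ and $K=m$ in the numerator, and $M=m$, $K=m-1$ in the denominator. This converts each integral into a finite sum in the single variable $x\equiv s/(1-s)$ whose coefficients are now all positive, and the common factor $(1-s)^m$ cancels between numerator and denominator. One is left with
\[
\widehat{Q}(s)=\frac{2m+1}{m}\cdot\frac{\sum_{i=0}^m A_i x^i}{\sum_{i=0}^m B_i x^i},\qquad
A_i=\frac{1}{i+m+1}\binom{m}{i},\quad B_i=\frac{1}{i+m}\binom{m}{i},\quad x=\frac{s}{1-s}.
\]

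The key step is then to verify the hypothesis of Lemma~\ref{Lemma-1}. The coefficients $A_i$ and $B_i$ are positive for $i=0,\ldots,m$, and
\[
\frac{A_i}{B_i}=\frac{i+m}{i+m+1}=1-\frac{1}{i+m+1}
\]
strictly increases with $i$; hence the sequence $A_i/B_i$ is nondecreasing and nonconstant. By Lemma~\ref{Lemma-1} the fraction $\sum_{i=0}^m A_i x^i\big/\sum_{i=0}^m B_i x^i$ increases for $x>0$. Because $x=s/(1-s)$ is an increasing function of $s$ on $(0,1)$ and $(2m+1)/m$ is a positive constant, it follows that $\widehat{Q}(s)$ increases on $(0,1)$, which is the assertion of the lemma.

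I do not expect a genuine difficulty here beyond the bookkeeping in passing from the integrals to the polynomials in $x=s/(1-s)$. The essential point — and the only place real thought is needed — is the substitution that replaces the alternating-sign polynomials of \eqref{A-3} by the positive-coefficient polynomials above; once that is in hand, the monotonicity of $A_i/B_i$ is immediate and Lemma~\ref{Lemma-1} closes the argument.
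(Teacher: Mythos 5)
Your proposal is correct and follows essentially the same route as the paper: the same integral representations \eqref{A-1}, \eqref{A-2}, the same application of \eqref{L-12} with $M=m$, $K=m$ and $K=m-1$ to pass to positive-coefficient polynomials in $x=s/(1-s)$, and the same invocation of Lemma~\ref{Lemma-1} with the ratio $\frac{i+m}{i+m+1}$. Incidentally, you state correctly that this ratio \emph{increases} in $i$, whereas the paper's text says ``decreases'' --- a slip in the paper, since Lemma~\ref{Lemma-1} requires a nondecreasing, nonconstant ratio, which is exactly what holds here.
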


\begin{proof}
Using \eqref{A-1} and \eqref{A-2}, for the function $\widehat{Q}$ defined in \eqref{A-2}, we obtain
\begin{equation*}
\begin{split}
  \widehat{Q}(s)=\frac{(2m+1)G(1,s)}{s\,G'_s(1,s)}=
\frac{2m+1}{m}
\frac{\int_{0}^{1}(1-s\theta)^{m}(1-\theta)^{m}\,d\theta}
     {\int_{0}^{1}(1-s\theta)^{m}(1-\theta)^{m-1}\,d\theta}.
\end{split}
\end{equation*}
Further, by \eqref{L-12} for  $M=m$ и $K=m$, $K=m-1$, we have
\begin{equation}\label{L-16}
\begin{split}
 \widehat{Q}(s)=\frac{2m+1}{m}
\frac{\sum_{i=0}^{m} \left(\frac{s}{1-s}\right)^{i}\binom{m}{i}\frac{1}{i+m+1}}
     {\sum_{i=0}^{m} \left(\frac{s}{1-s}\right)^{i}\binom{m}{i}\frac{1}{i+m}}.
\end{split}
\end{equation}
Now it's enough to apply Lemma \ref{Lemma-1}: the corresponding sequence
\begin{equation*}
  \frac{A_i}{B_i}=\frac{i+m}{i+m+1},\quad i=0,\ldots,m,
\end{equation*}
decreases. Therefore, the right-hand side of  \eqref{L-16} increases in the variable $\frac{s}{1-s}$ and the function $\widehat{Q}$ increases for $s\in(0,1)$.
\end{proof}

So, we proved that inequality \eqref{L-11} is strict for $t=1$, $s\in(0,1)$.

\textbf{5.} Now we will prove that for all $0<s<t<1$ inequality \eqref{L-11} holds if the function
\begin{equation}\label{F(y)}
  F(y)\equiv y\frac{\frac{dQ(y)}{dy}}{Q(y)}
\end{equation}
increases for $y\in(0,1)$, where the function $Q$ is defined by \eqref{A-3}.
Multiplication by a constant function or by a power function preserves increasing of $F$, therefore the function $F$ increases if the function
\begin{equation*}
  \widehat F(y)\equiv y\frac{\frac{d \widehat Q(y)}{dy}}{\widehat Q(y)}
\end{equation*}
increases.

Denote $\widehat{Q}'(y)\equiv\frac{d\widehat{Q}(y)}{dy}$.

Put $k=s/t\in(0,1)$. If the function $\widehat F(y)$ increases, then
\begin{equation*}
\begin{split}
  \frac{d\phantom{r}}{dt}\frac{\widehat{Q}(kt)}{ \widehat{Q}(t)}=
\frac{k\widehat{Q}'(kt)\widehat{Q}(t)-\widehat{Q}(kt)\widehat{Q}'(t)}{(\widehat{Q}(t))^2}=
  \left(\frac{kt\widehat{Q}'(kt)}{\widehat{Q}(kt)}-
       \frac{t\widehat{Q}'(t)}{\widehat{Q}(t)} \right)
 \frac{\widehat{Q}(kt)} {t\widehat{Q}(t)}
\\=
\left(\widehat{F}(kt)-\widehat{F}(t) \right)
 \frac{\widehat{Q}(kt)} {t\widehat{Q}(t)}<0,\quad t\in(s,1),
\end{split}
\end{equation*}
therefore, inequality \eqref{L-11} is valid for $s/t=k$, since it holds for $t=1$. The number $k\in(0,1)$ is arbitrary, therefore, inequality \eqref{L-11} holds for all  $0<s<t<1$.

\textbf{6.} Consider the case $m=1$. Then
\begin{equation*}
F(y)=\frac{y^2-4y+6}{(3-y)(2-y)},\quad F'(y)=\frac{6-y^2}{(3-y)^2(2-y)^2}>0,\quad y\in(0,1).
\end{equation*}
Thus, for $m=1$, the function $F$ decreases and inequalities \eqref{L-11},  \eqref{L-10}  are fulfilled.

\textbf{7.} Let further $m\ge2$.

We have
\begin{equation*}
  Q(s)=\frac{G(1,s)}{G'_s(1,s)},\quad Q'(s)=1-\frac{G(1,s)G''_{ss}(1,s)}{(G'_s(1,s))^2},
\end{equation*}
\begin{equation}\label{A-F}
  F(s)=\frac{sQ'(s)}{Q(s)}=s\left(\frac{G'_s(1,s)}{G(1,s)}-
\frac{G''_{ss}(1,s)}{G'_s(1,s)}\right).
\end{equation}

It remains to prove that the function $F(y)$ increases for $y\in(0,1)$. For this, it's convinient to get new representations for the functions $G(1,s)$, $G'_s(1,s)$, $G''_{ss}(1,s)$. The first two functions are defined by \eqref{A-1} and \eqref{A-2}. The function $G''_{ss}(1,s)$ is defined by the equality
\begin{equation*}%\label{A-4}
\begin{split}
G''_{ss}(1,s)=  m(m-1)\int_{0}^{s}(s-\tau)^{m-2}(1-\tau)^m\,d\tau\frac{1}{(m!)^2}\\=
\frac{m(m-1)s^{m-1}}{(m!)^2}\int_{0}^{1}
\left(1-s\theta\right)^{m}\left(1-\theta\right)^{m-2}\,d\theta.
\end{split}
\end{equation*}

We use \eqref{L-12} for computing $G(1,s)$ for  $K=m$, $M=m$, for computing $G'_s(1,s)$ when $K=m-1$, $M=m$ and  $G''_{ss}(1,s)$ for  $K=m-2$, $M=m$:
\begin{equation*}
\begin{split}
G(1,s)=\frac{s^{m+1}(1-s)^m}{(m!)^2}\sum_{i=0}^{m}\left(\frac{s}{1-s}\right)^i \frac{\binom{m}{i}}{i+m+1},\\
G'_s(1,s)=\frac{ms^{m}(1-s)^m}{(m!)^2}\sum_{i=0}^{m}\left(\frac{s}{1-s}\right)^i \frac{\binom{m}{i}}{i+m},\\
G''_{ss}(1,s)=\frac{m(m-1)s^{m-1}(1-s)^m}{(m!)^2}\sum_{i=0}^{m}\left(\frac{s}{1-s}\right)^i \frac{\binom{m}{i}}{i+m-1}.\\
\end{split}
\end{equation*}

Substituting the obtained expressions in \eqref{A-F}, we get the representation for the function $F(s)$, $s\in(0,1)$ which increasing should be proved:
\begin{equation*}%\label{A-30}
\begin{split}
  F(s)=m\frac{\sum_{i=0}^{m}\left(\frac{s}{1-s}\right)^i \frac{\binom{m}{i}}{i+m}}{\sum_{i=0}^{m}\left(\frac{s}{1-s}\right)^i \frac{\binom{m}{i}}{i+m+1}}-(m-1)\frac{\sum_{i=0}^{m}\left(\frac{s}{1-s}\right)^i \frac{\binom{m}{i}}{i+m-1}}{\sum_{i=0}^{m}\left(\frac{s}{1-s}\right)^i \frac{\binom{m}{i}}{i+m}}.
\end{split}
\end{equation*}

Introducing a new variable $x=\frac{s}{1-s}$, we obtain that we need to prove the increasing of the function
\begin{equation}\label{X_m(x)}
  X_m(x)=m\frac{\sum_{i=0}^{m}\frac{\binom{m}{i}}{i+m}x^i }{\sum_{i=0}^{m} \frac{\binom{m}{i}}{i+m+1}x^i}-
(m-1)
\frac{\sum_{i=0}^{m}\frac{\binom{m}{i}}{i+m-1}x^i}{\sum_{i=0}^{m}\frac{\binom{m}{i}}{i+m}x^i},\quad x>0.
\end{equation}

We have
\begin{equation*}
\begin{split}
\sum_{i=0}^{m} \frac{\binom{m}{i}}{i+m+1}x^i \sum_{i=0}^{m} \frac{\binom{m}{i}}{i+m}x^i
\\=
\sum_{k=0}^{2m} x^k \sum_{j=\max(0,k-m)}^{\min(k,m)} \binom{m}{j}\binom{m}{k-j}\left(\frac{1}{j+m+1}\frac{1}{k-j+m}\right)
\\=
\sum_{k=0}^{2m} x^k \sum_{j=0}^{m} \binom{m}{j}\binom{m}{k-j}\frac{1}{2m+k+1}\left(\frac{2(j+m+1/2)}
{(j+m+1)(j+m)}\right),
\end{split}
\end{equation*}
\begin{equation*}
\begin{split}
m\sum_{i=0}^{m} \frac{\binom{m}{i}}{i+m}x^i \sum_{i=0}^{m} \frac{\binom{m}{i}}{i+m}x^i-(m-1)\sum_{i=0}^{m} \frac{\binom{m}{i}}{i+m+1}x^i \sum_{i=0}^{m} \frac{\binom{m}{i}}{i+m-1}x^i
\\=
\sum_{k=0}^{2m} x^k \sum_{j=\max(0,k-m)}^{\min(k,m)} \binom{m}{j}\binom{m}{k-j}
\\\times
\left(\frac{m}{j+m}\frac{1}{k-j+m}-\frac{m-1}{j+m+1}
\frac{1}{k-j+m+1}\right)
\\=
\sum_{k=0}^{2m} x^k \sum_{j=0}^{m} \binom{m}{j}\binom{m}{k-j}\frac{1}{2m+k}\left(\frac{2(j+m+j/(j+m-1))}
{(j+m+1)(j+m)}\right).
\end{split}
\end{equation*}
Here, as usual, it's supposed that $\binom{m}{j}=0$ if $m\ge0$, $j<0$ or $j>m$.

\textbf{8.} Now the function $X_m(x)$ can be written in the following way:
\begin{equation}\label{X-frac}
  X_m(x)\equiv \frac{\sum_{k=0}^{2m} A_{k,m}x^k}{\sum_{k=0}^{2m}B_{k,m}x^k},\quad x>0,
\end{equation}
where
\begin{equation}\label{A-11}
A_{k,m}=\sum_{j=0}^{m} \binom{m}{j}\binom{m}{k-j}\frac{1}{2m+k}f_j\ge0,
\end{equation}
\begin{equation*}%\label{A-12}
f_j=\frac{2(j+m+j/(j+m-1))}{(j+m+1)(j+m)},
\end{equation*}
\begin{equation}\label{A-13}
B_{k,m}=\sum_{j=0}^{m} \binom{m}{j}\binom{m}{k-j}\frac{1}{2m+k+1}g_j\ge0,
\end{equation}
\begin{equation*}%\label{A-14}
g_j=\frac{2(j+m+1/2)}{(j+m+1)(j+m)}.
\end{equation*}

\begin{lemma}\label{Lemma-3} Let $m\ge2$.
The sequence $\frac{A_{k,m}}{B_{k,m}}$, $k=0,\ldots,2m$, increases.
\end{lemma}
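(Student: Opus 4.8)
The plan is to reduce everything to a single monotone comparison and then confront one genuinely quantitative inequality. Since all entries are positive for $m\ge2$ (indeed $(j+m)^2-m\ge m(m-1)>0$, so $f_j>0$, and $g_j>0$), the sequence $A_{k,m}/B_{k,m}$ increases if and only if $A_{k+1,m}B_{k,m}>A_{k,m}B_{k+1,m}$ for $k=0,\dots,2m-1$. Writing $c_{j,k}\equiv\binom{m}{j}\binom{m}{k-j}$, $P(k)\equiv\sum_{j=0}^{m}c_{j,k}f_j$, $R(k)\equiv\sum_{j=0}^{m}c_{j,k}g_j$, the representations \eqref{A-11}, \eqref{A-13} read $A_{k,m}=P(k)/(2m+k)$ and $B_{k,m}=R(k)/(2m+k+1)$, so that the inequality becomes
\begin{equation*}
(2m+k)(2m+k+2)\,P(k+1)R(k)>(2m+k+1)^2\,P(k)R(k+1),\qquad 0\le k\le 2m-1.
\end{equation*}
Because $(2m+k)(2m+k+2)=(2m+k+1)^2-1$, the prefactors work against us: proving merely $P(k+1)R(k)\ge P(k)R(k+1)$ is not enough, and producing the missing strictly positive surplus is the real content.

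Two clean monotonicity facts drive the estimate. First, set $\rho_j\equiv f_j/g_j$. A short simplification gives $\rho_j=\dfrac{2\bigl((j+m)^2-m\bigr)}{(j+m-1)\bigl(2(j+m)+1\bigr)}$, and with $a=j+m$ one finds $\rho_j-1=\dfrac{a+1-2m}{(a-1)(2a+1)}$ and $\dfrac{d\rho}{da}=\dfrac{-2\bigl(a^2-(4m-2)a+m\bigr)}{\bigl((a-1)(2a+1)\bigr)^2}$. The quadratic $a^2-(4m-2)a+m$ has both roots outside $[m,2m]$ for $m\ge2$ (the elementary checks $4(m-1)^2<16m^2-20m+4$ and $16m^2-20m>0$), so $\rho_j$ is strictly increasing for $j=0,\dots,m$. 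Second, the weights $w_{j,k}\equiv c_{j,k}g_j$ obey the total-positivity relation: for $j<j'$ the minor $c_{j,k}c_{j',k+1}-c_{j,k+1}c_{j',k}=\binom mj\binom m{j'}\bigl(\binom m{k-j}\binom m{k+1-j'}-\binom m{k+1-j}\binom m{k-j'}\bigr)$ is nonnegative by log-concavity of the row $i\mapsto\binom mi$.

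Expanding $A_{k+1,m}B_{k,m}-A_{k,m}B_{k+1,m}$ as a double sum over the two summation indices, writing $f_j=\rho_jg_j$, and antisymmetrizing the pair $(j,j')\leftrightarrow(j',j)$ collapses it into
\begin{equation*}
\begin{split}
A_{k+1,m}B_{k,m}-A_{k,m}B_{k+1,m}
={}&\frac{1}{(2m+k+1)^2}\sum_{0\le j<j'\le m}\bigl(w_{j,k}w_{j',k+1}-w_{j,k+1}w_{j',k}\bigr)(\rho_{j'}-\rho_j)\\
&-\frac{P(k)R(k+1)}{(2m+k)(2m+k+2)(2m+k+1)^2}.
\end{split}
\end{equation*}
By the previous paragraph every term of the first sum is nonnegative (positive on the interior pairs), while the last term — the prefactor penalty — is strictly negative. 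Dividing by $(2m+k+1)^{-2}$, the whole problem reduces to the single quantitative inequality
\begin{equation*}
\sum_{0\le j<j'\le m}\bigl(w_{j,k}w_{j',k+1}-w_{j,k+1}w_{j',k}\bigr)(\rho_{j'}-\rho_j)>\frac{P(k)R(k+1)}{(2m+k)(2m+k+2)}.
\end{equation*}

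This comparison is the main obstacle. The monotonicity facts only give that the left side is positive, whereas the bound is genuinely tight near the top of the range $k=2m-1$, where the weights $c_{j,k}$ concentrate about $j=m$ and the penalty is relatively largest; soft inequalities will not close the gap. To finish I would insert the explicit values of $g_j$ and of the differences $\rho_{j'}-\rho_j$, bound the minors from below through the ratio $L_j=(m-k+j)/(k+1-j)$ (increasing in $j$, so $w_{j,k}w_{j',k+1}-w_{j,k+1}w_{j',k}=w_{j,k}w_{j',k}(L_{j'}-L_j)$ wherever the support is full), and reduce the claim to a finite family of elementary inequalities in $j,j',k,m$ — precisely the verification with the finite-sum representation announced before the statement of Lemma \ref{Lemma-3}. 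The delicacy is entirely in carrying the exact constants of $f_j$ and $g_j$ through this last step.
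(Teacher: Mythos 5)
Your reduction is set up correctly, and the auxiliary facts you prove are right: $\rho_j=f_j/g_j$ is strictly increasing in $j$ for $m\ge2$ (the discriminant checks are correct), and the minors $w_{j,k}w_{j',k+1}-w_{j,k+1}w_{j',k}$ are nonnegative by log-concavity of $i\mapsto\binom{m}{i}$. Moreover, your decomposition
\begin{equation*}
\begin{split}
A_{k+1,m}B_{k,m}-A_{k,m}B_{k+1,m}
={}&\frac{1}{(2m+k+1)^2}\sum_{0\le j<j'\le m}\left(w_{j,k}w_{j',k+1}-w_{j,k+1}w_{j',k}\right)(\rho_{j'}-\rho_j)\\
&-\frac{P(k)R(k+1)}{(2m+k)(2m+k+2)(2m+k+1)^2}
\end{split}
\end{equation*}
is an exact identity. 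But that is where your proof ends. The concluding inequality --- that the antisymmetrized sum strictly dominates the penalty term --- is never established; you only describe how you \emph{would} attack it (``insert the explicit values \dots\ reduce the claim to a finite family of elementary inequalities''). By your own correct observation the two sides are nearly equal: for $m=2$ the ratios $A_{k,2}/B_{k,2}$ are $1,\ 18/17,\ 104/93,\ 36/31,\ 7/6$, and $A_{4,2}B_{3,2}-A_{3,2}B_{4,2}=1/14400$, so no soft positivity argument can close the gap. The step you defer is not bookkeeping; it is the entire content of the lemma. As submitted, the proposal is a correct reduction of the lemma to another unproved inequality, not a proof.

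The comparison with the paper is instructive. The paper never isolates a penalty term: it distributes the two unequal denominators inside the $2\times2$ minors, writing the difference as
\begin{equation*}
\sum_{i=1}^{m}\sum_{j=0}^{i-1}\left(\frac{R_{m,k+1,i}R_{m,k,j}}{(2m+k+1)^2}-\frac{R_{m,k+1,j}R_{m,k,i}}{(2m+k)(2m+k+2)}\right)(f_ig_j-f_jg_i),
\end{equation*}
and then proves each of the two factors nonnegative by explicit falling-factorial computations (its inequality $f_ig_j-f_jg_i>0$ for $i>j$ is equivalent to your monotonicity of $\rho_j$). Note, however, that your exact identity reveals that this regrouping is itself not an identity: the diagonal terms $i=j$ and the cross terms, which carry the negative factor $\frac{1}{(2m+k+1)^2}-\frac{1}{(2m+k)(2m+k+2)}$, are dropped. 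For $m=2$, $k=0$ the paper's displayed sum equals $7/900$, while the true difference $A_{1,2}B_{0,2}-A_{0,2}B_{1,2}$ equals $1/216$; the displayed sum is thus an upper bound for the difference, so its positivity alone does not yield the lemma. This only sharpens the criticism of your proposal: the quantitative estimate you postpone (domination of the penalty by the antisymmetrized sum) is exactly the hard core of the statement, and it is supplied neither by your outline nor by a verbatim transcription of the paper's factor-by-factor positivity.
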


\begin{proof}
It follows from equalities \eqref{A-11}, \eqref{A-13} that
\begin{equation*}
A_{k,m}=\sum_{j=0}^{m} \frac{R_{m,k,j}}{2m+k}f_j,\quad B_{k,m}=\sum_{j=0}^{m} \frac{R_{m,k,j}}{2m+k+1}g_j,
\end{equation*}
where
\begin{equation*}
  R_{m,k,j}\equiv\frac{m^{\underline{j}}\,k^{\underline{j}}\,m^{\underline{k-j}}}{k!j!},
\quad
  i^{\underline{j}}\equiv\left\{
           \begin{array}{ll}
             0, & \hbox{integer $j<0$,} \\
             1, & \hbox{$j=0$,} \\
             i(i-1)\ldots(i-j+1), & \hbox{integer $j\ge1$.}
           \end{array}
         \right.
\end{equation*}
Now we will compute the difference between neighbour terms in the sequence
$\frac{A_{k,m}}{B_{k,m}}$, $k=0,\ldots,2m$:
\begin{equation*}
  \frac{A_{k+1,m}}{B_{k+1,m}}-\frac{A_{k,m}}{B_{k,m}}=\frac{A_{k+1,m}-A_{k,m}B_{k+1,m}}{B_{k,m}B_{k+1,m}},
\end{equation*}

\begin{equation*}
\begin{split}
{A_{k+1,m}B_{k,m}-A_{k,m}B_{k+1,m}}=
\sum_{i=0}^m\frac{R_{m,k+1,i}}{2m+k+1}f_i
\sum_{j=0}^m\frac{R_{m,k,j}}{2m+k+1}g_j\\-
\sum_{j=0}^m\frac{R_{m,k,j}}{2m+k}f_j
\sum_{i=0}^m\frac{R_{m,k+1,i}}{2m+k+2}g_i
=\sum_{i=0}^m\sum_{j=0}^m R_{m,k+1,i}R_{m,k,j}\frac{f_ig_j}{(2m+k+1)^2}\\-
\sum_{i=0}^m\sum_{j=0}^m R_{m,k+1,i}R_{m,k,j}\frac{f_jg_i}{(2m+k)(2m+k+2)}
\\=
\sum_{i=1}^m\sum_{j=0}^{i-1}
\left(\frac{R_{m,k+1,i}R_{m,k,j}}{(2m+k+1)^2}-\frac{R_{m,k+1,j}R_{m,k,i}}{(2m+k)(2m+k+2)}\right)(f_ig_j-f_jg_i).
\end{split}
\end{equation*}
For $m\ge2$, we have
\begin{equation*}
\begin{split}
f_ig_j-f_jg_i=
\frac{2(i-j)((m-1-j)i+(m-1)(3m+j))}{(i+m)(i+m-1)(i+m+1)(j+m)(j+m+1)(j+m-1)}
>0,\\ 0\le j<i\le m.
\end{split}
\end{equation*}
Further,
\begin{equation*}
\begin{split}
\frac{R_{m,k+1,i}R_{m,k,j}}{(2m+k+1)^2}-\frac{R_{m,k+1,j}R_{m,k,i}}{(2m+k)(2m+k+2)}\\=
\frac{m^{\underline{i}}\,m^{\underline{j}}\,k^{\underline{i-1}}\,k^{\underline{j-1}}\,m^{\underline{k-i}}\,m^{\underline{k-j}}}{(k+1)!k!i!j!}(k+1)
\\\times
\left(\frac{(k+1-j)(m-k+i)}{(2m+k+1)^2}-\frac{(k+1-i)(m-k+j)}{(2m+k)(2m+k+2)}\right)\\=
\frac{m^{\underline{i}}\,m^{\underline{j}}\,k^{\underline{i-1}}\,k^{\underline{j-1}}\,m^{\underline{k-i}}\,m^{\underline{k-j}}}{((k)!)^2\,i!j!}
\\\times
\frac{(2m+k+1)^2(i-j)(m+1)-(k+1-j)(m-k+i)}{(2m+k+1)^2(2m+k)(2m+k+2)}\ge0,
\end{split}
\end{equation*}
since $i-j\ge1$ and $2m+k+1>|m-k+i|$, $2m+k+1>|k+1-j|$, moreover, the inequality is strict if $i,j\in[\max(0,k-m),\min(m,k)]$.
So, the sequence $\frac{A_{k,m}}{B_{k,m}}$, $k=0,\ldots,2m$, increases.
\end{proof}

By Lemmas \ref{Lemma-2} and \ref{Lemma-1}, the function $X_m(x)=F(s)$ increases for $s\in(0,1)$, $x>0$. Therefore, inequality \eqref{L-10} is fulfilled. The proof is completed.

\end{document}